
\documentclass[10pt]{amsart}
\bibliographystyle{amsalpha}

\setlength{\topmargin}{-1.cm} \setlength{\headsep}{1.6cm}
\setlength{\evensidemargin}{.7cm} \setlength{\oddsidemargin}{.7cm}
\setlength{\textheight}{21.0cm} \setlength{\textwidth}{15.0cm}

\let\oldmarginpar\marginpar
\renewcommand\marginpar[1]{\-\oldmarginpar[\raggedleft\footnotesize #1]%
{\raggedright\footnotesize #1}}


\usepackage{mathptmx}
\usepackage{amsmath}
\usepackage{amscd}
\usepackage{amssymb}
\usepackage{amsthm}
\usepackage{xspace}
\usepackage[all,tips]{xy}
\usepackage[dvips]{graphicx}
\usepackage{verbatim}
\usepackage{syntonly}
\usepackage{hyperref}
\usepackage{arydshln}
\usepackage{lineno,color}


\theoremstyle{plain}
\newtheorem{thm}{Theorem}[section]
\newtheorem{cor}[thm]{Corollary}
\newtheorem{prop}[thm]{Proposition}
\newtheorem{lemma}[thm]{Lemma}

\theoremstyle{definition}

\newtheorem{defn}[thm]{Definition}

 \DeclareMathOperator{\Sol}{Sol}

\DeclareMathOperator{\lcm}{lcm}

\DeclareMathOperator{\D}{D}

\DeclareMathOperator{\Farb}{RF}

\DeclareMathOperator{\Exp}{Exp}


\newcommand{\bdef}{\overset{\text{def}}{=}}


\newcommand{\al}{\alpha}

\newcommand{\ga}{\gamma}




\newcommand{\innp}[1]{\left< #1 \right>}

\newcommand{\set}[1]{\left\{#1\right\}}

\newcommand{\pr}[1]{\left( #1 \right) }




\newcommand{\N}{\ensuremath{\mathbb{N}}}

\newcommand{\R}{\ensuremath{\mathbb{R}}}
\newcommand{\Z}{\ensuremath{\mathbb{Z}}}
\newcommand{\C}{\ensuremath{\mathbb{C}}}
%
%



%

%
%

%
%
\newcommand{\map}[3]{#1 : #2 \rightarrow #3}


\newcommand{\nsub}{\trianglelefteq}
\newtheoremstyle{TheoremNum}
{\topsep}{\topsep}
{\itshape}
{}
{\bfseries}
{.}
{ }
{\thmname{#1}\thmnote{ \bfseries #3}}
\theoremstyle{TheoremNum}
\newtheorem{thmn}{Theorem} 

\begin{document}


\title{\textbf{Residual finiteness and strict distortion of cyclic subgroups of solvable groups}}
\author{Mark Pengitore} 
\maketitle
\begin{abstract}
We provide polynomial lower bounds for residual finiteness of residually finite, finitely generated solvable groups that admit infinite order elements in the Fitting subgroup of strict distortion at least exponential. For this class of solvable groups which include polycyclic groups with a nontrivial exponential radical and the metabelian Baumslag-Solitar groups, we improve the lower bounds found in the literature. Additionally, for the class of residually finite, finitely generated solvable groups of infinite Pr\"{u}fer rank that satisfy the conditions of our theorem, we provide the first nontrivial lower bounds.
\end{abstract}

 \textbf{Keywords:} Solvable groups, residual finiteness

\section{Introduction}
A group $G$ is \textbf{residually finite} if for each nontrivial element $x \in G$, there exists a surjective group morphism $\varphi:G \to H$ to a finite group such that $\varphi(x) \neq 1$. In this case, we say that $H$ is a \textbf{finite witness of $x$}. When $G$ comes equipped with a finite generating subset $S$, we may quantify residual finiteness of $G$ with the function $\Farb_{G,S}(n)$ which was introduced in \cite{BouRabee10}. The value of $\Farb_{G,S}(n)$ is the maximum order of a finite group needed to witness a nontrivial element as one varies over nontrivial elements of word length at most $n$ with respect to the generating subset $S$. Since the dependence of $\Farb_{G,S}(n)$ on $S$ is mild, we will suppress the generating subset throughout the introduction.

Much of the previous work in the literature has been to compute $\Farb_{G}(n)$ for interesting classes of groups and to provide characterizations of various classes of groups based on the behavior of $\Farb_G(n)$ (see \cite{LLM} and the references therein).  Upper bounds have been provided for $\Farb_{G}(n)$ for many classes of groups such as the non-abelian free group of finite rank, finitely generated nilpotent groups, polycyclic groups, and more generally linear groups. On the other hand, providing lower bounds for $\Farb_{G}(n)$ tends be more difficult, and not as much work has been done. For instance, lower bounds have been provided for the non-abelian free group of finite rank, finitely generated nilpotent groups, and the first Grigorchuk group. Our main interest is in lower bounds for $\Farb_G(n)$ for residually finite, finitely generated solvable groups. \cite{BM11} gave the first nontrivial lower bound for residually finite, finitely generated solvable non-virtually nilpotent groups. For solvable groups $G$ of this class that are of finite Pr\"{u}fer rank, it was demonstrated that $n^{1/2m} \preceq \Farb_{G}(n)$ where $m$ is some natural number.  Additionally, for each recursive function $f: \N \to \N$, \cite{sapir_myasnikov} constructs a residually finite, finitely presentable solvable group $G$ of derived length $3$ such that $f(n) \preceq \Farb_{G}(n)$. Thus, for a general residually finite, finitely presentable solvable group, there is no possible class of functions that provide upper asymptotic bounds for residual finiteness. However, one can hope to provide lower asymptotic bounds for residual finiteness for a large class of residually finite, finitely generated solvable non-virtually nilpotent groups given certain assumptions. In this article, we provide an improved lower bound for $\Farb_{G}(n)$ when $G$ is a residually finite, finitely generated solvable group with elements in the Fitting subgroup which have strict distortion at least exponential. In particular, this article improves on the lower bounds found in the literature for finitely generated solvable groups of finite Pr\"{u}fer rank which satisfy our main theorem and gives the first nontrivial lower bound for the class of finitely generated solvable groups of infinite Pr\"{u}fer rank that satisfy the main theorem of this article.

Before proceeding to the main result of this paper, we introduce some notation and terminology. For two nondecreasing functions $f,g: \N \to \N$, we say that $f \preceq g$ if there exists a constant $C > 0$ such that $f(n) \leq C \: g(C \: n)$ for all $n$. We say that $f \approx g$ if $f \preceq g$ and $g \preceq f$. We say that an infinite order element $x$ in a group $G$ equipped with a finite generating subset $S$ is strictly $f$-distorted if there exist constants $C_1, C_2 > 0$ such that $C_1 |n| \leq f(\|x^n\|_S) \leq C_2 |n|$. For a group $G$, we denote $\ga_m(G)$ as the $m$-th step of the lower central series, and when $G$ is a finitely generated solvable group, we denote $\text{Fitt}(G)$ as the Fitting subgroup. For a nontrivial element $x \in \text{Fitt}(G)$, we say that $x$ has nilpotent depth $m$ if there exists a normal nilpotent subgroup $N$ of $G$ such that $\innp{x} \cap \ga_m(N) \neq \set{1}$.
\begin{thm}\label{lower_bound_rf}
	Let $G$ be a residually finite, finitely generated solvable group that is not virtually nilpotent, and let $f: \N \to \N$ be a nondecreasing function such that $2^n \preceq f(n) \preceq 2^{2^n}$. If $G$ admits an infinite order, strictly $f$-distorted element $x$ contained in $\text{Fitt}(G)$, then $$\log(f(n)) \preceq \Farb_{G}(n),$$ and if $x$ has nilpotent depth $m > 1$, then  $$(\log(f(n)))^{m + 1} \preceq \Farb_{G}(n).$$ 
In particular, $n \preceq \Farb_G(n)$, and if $x$ has nilpotent depth $m > 1$, then $n^{m+1} \preceq \Farb_G(n)$.
\end{thm}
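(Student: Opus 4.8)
The plan is to derive a lower bound for $\Farb_{G}(n)$ from the distortion hypothesis by analyzing how finite quotients must behave on the powers of the distorted element $x$. The key conceptual point is that if $\varphi: G \to Q$ is a finite quotient with $|Q| \le k$, then the image $\varphi(x)$ has order dividing $|Q|$, so $\varphi(x^{d}) = 1$ for some $d \le k$. Thus to witness a nontrivial power $x^{j}$ of small order (say $j$ prime, or $j$ chosen to avoid all small orders), one needs $|Q|$ at least comparable to $j$. Conversely, the distortion bound $C_{1}|j| \le f(\|x^{j}\|_{S}) \le C_{2}|j|$ tells us that $x^{j}$ has word length $\|x^{j}\|_{S} \le f^{-1}(C_{2}^{-1}\cdot\text{something})$... more precisely, an element of word length $\le n$ of the form $x^{j}$ has $|j| \le C_{1}^{-1} f(n)$, and we can realize powers $x^{j}$ with $|j|$ up to roughly $f(n)$ within word length $\lesssim n$. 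So the ``hardest'' nontrivial elements to kill among the $\innp{x}$, living in the ball of radius $n$, are powers $x^{j}$ with $j$ of size $\asymp f(n)$, and killing such an element while respecting residual finiteness forces a finite quotient of order $\gtrsim \log f(n)$ in the crude case (one only needs to distinguish $x^{j}$ from $1$, and a cyclic quotient $\Z/p\Z$ for a suitable prime $p \nmid j$ of size $O(\log j)$ suffices as an \emph{upper} bound obstruction — but the \emph{lower} bound needs the structure of $G$).

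**The main work**, and the reason the full strength of the theorem (the exponent $m+1$) requires the solvable/nilpotent-depth machinery, is to show that one genuinely \emph{cannot} witness $x^{j}$ in a quotient much smaller than $\log f(n)$, respectively $(\log f(n))^{m+1}$. For the base case $m = 1$: $x$ lies in $\gamma_{1}(N) = N$ for some normal nilpotent $N \nsub G$, and one shows that any finite quotient $\varphi: G \to Q$ with $\varphi(x^{j}) \ne 1$ must have order at least $\gtrsim |j|$ up to the relevant logarithm — this uses that $G$ is not virtually nilpotent together with control on how $N$ and its finite-index behavior interact with the distortion, essentially a pigeonhole/counting argument on the orders of $\varphi(x)$ forced by the action of $G/N$. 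For the depth-$m$ refinement, $x \in \gamma_{m}(N)$, and a commutator-collecting argument shows that witnessing $x^{j}$ forces witnessing a whole nested family of commutator relations, each contributing a multiplicative factor, yielding $(\log f(n))^{m+1}$; I would model this on the lower-bound arguments for $\Farb$ of free nilpotent groups, where the analogous exponent growth with nilpotency class is standard, adapted here so that the distortion converts ``word length $n$'' into ``power size $f(n)$''.

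**Carrying this out in order:** (1) Fix the normal nilpotent $N$ with $\innp{x} \cap \gamma_{m}(N) \ne \{1\}$, replace $x$ by a power so that $x \in \gamma_{m}(N)$ outright, noting this changes constants only. (2) For each $n$, identify an explicit nontrivial element $g_{n}$ in the ball of radius $\asymp n$ — namely $g_{n} = x^{j_{n}}$ with $j_{n} \asymp f(n)$ chosen (using the upper distortion bound) to lie in that ball, and chosen to have no small prime factors so that small cyclic quotients fail. (3) Show any $\varphi: G \to Q$ finite with $\varphi(g_{n}) \ne 1$ has $|Q| \gtrsim \log f(n)$ in general, and $|Q| \gtrsim (\log f(n))^{m+1}$ when $m > 1$, via the counting/commutator argument sketched above, crucially invoking non-virtual-nilpotence to rule out the degenerate case where $x$ has finite order in every small quotient for trivial reasons. (4) Conclude $\Farb_{G,S}(n) \ge |Q| \gtrsim (\log f(n))^{m+1}$, and specialize to $f(n) \succeq 2^{n}$ to get $\log f(n) \succeq n$, hence $n^{m+1} \preceq \Farb_{G}(n)$ (and $n \preceq \Farb_{G}(n)$ when $m=1$), which is the ``in particular'' clause.

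**The hard part** will be step (3): the passage from ``$x$ is deeply distorted'' to ``no small finite quotient can detect the relevant power'' is where the noncommutative structure has to be exploited. One must show that the distortion of $x$ is \emph{reflected} in every finite quotient — i.e., that a finite quotient cannot ``undo'' the distortion by mapping $x$ to an element of tiny order unless $|Q|$ is correspondingly large — and then bootstrap through the lower central series of $N$ to pick up the extra powers of $\log f(n)$. I expect this to require a careful induction on $m$ together with a quantitative version of the fact that in a nilpotent group of class $m$, an order-$d$ element in $\gamma_{m}$ forces the ambient group to have order at least polynomial of degree $m+1$ in $d$; the interplay of this with the solvable (not necessarily nilpotent) ambient $G$ and with choosing $j_{n}$ coprime to the relevant torsion is the delicate bookkeeping.
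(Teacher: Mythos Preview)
Your overall plan --- exhibit powers $x^{j}$ of word length $\asymp n$ with $j \asymp f(n)$ and argue that no small finite quotient can witness them --- is the right shape, but two concrete pieces are either wrong or missing.

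First, your choice of $j_n$ is backwards. You propose picking $j_n$ with ``no small prime factors so that small cyclic quotients fail.'' In fact this makes small quotients \emph{succeed}: if $j_n$ is odd, then any quotient in which $\varphi(x)$ has order $2$ already witnesses $x^{j_n}$. The correct choice is the opposite: take $\alpha_i = \lcm\{1,\dots,p_i-1\}$ where $p_i$ is the $i$-th prime. Then any finite quotient $H$ with $|H| < p_i$ has $\text{Ord}_H(\varphi(x)) < p_i$, hence $\text{Ord}_H(\varphi(x)) \mid \alpha_i$, hence $\varphi(x^{\alpha_i}) = 1$. By the Prime Number Theorem $\log \alpha_i \approx p_i$, and by distortion $f(\|x^{\alpha_i}\|) \approx \alpha_i$, so $\D_G(x^{\alpha_i}) \geq p_i \approx \log f(\|x^{\alpha_i}\|)$. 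That is the entire argument for the first bound; no solvable structure, no non-virtual-nilpotence, no ``pigeonhole on orders forced by the action of $G/N$'' is used or needed here. Your step (3) for the base case is looking for machinery that does not exist and is not required.

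Second, for the depth-$m$ improvement, your proposed mechanism (``an order-$d$ element in $\ga_m$ forces the ambient nilpotent group to have order polynomial of degree $m+1$ in $d$'') is not correct as stated and is not what drives the bound. Two ingredients you are missing do the work. (i) A reduction step: given any finite quotient $H$ with $\varphi(x) \neq 1$, one passes to a further quotient in which the Fitting subgroup is a $q$-group for a \emph{single} prime $q$, without killing $\varphi(x)$; this uses the Sylow decomposition of $\text{Fitt}(H)$ and is where solvability of the target genuinely enters. (ii) Once $\text{Fitt}(H)$ is a $q$-group and $\varphi(x)\neq 1$, the image of $\ga_m(N)$ is nontrivial, so $\text{Fitt}(H)$ has step length at least $m$ and hence $|\text{Fitt}(H)| \geq q^{m+1}$. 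Now either $q \geq p_i$ and one gets $|H| \geq p_i^{m+1}$ immediately, or $q < p_i$ and a divisibility argument (taking $\alpha_i = \lcm\{1,\dots,p_i-1\}^{m+2}$ this time) forces $|\text{Fitt}(H)| \mid \alpha_i$, whence $\varphi(x^{\alpha_i}) = 1$. Without the $q$-group reduction (i), your commutator-collecting idea has nothing to grab onto, since $\text{Fitt}(H)$ could a priori be a product of many tiny primary components, none individually large. The exponent $m+1$ comes from the step-length bound $|q\text{-group of class } m| \geq q^{m+1}$, not from iterating a bound in the order of $\varphi(x)$.
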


Before proceeding to applications, we provide some justification for the various restrictions in our theorem. It was demonstrated in \cite{BouRabee10} that if $G$ is a virtually nilpotent group, then $\Farb_{G}(n) \preceq (\log(n))^k$ where $k$ is some natural number; consequently, we assume that our groups are not virtually nilpotent in order to have a new result. Since there exist examples of non-residually finite, finitely presentable solvable groups of derived length $3$, it is necessary to assume that our solvable groups are residually finite. We also note that if the solvable group $G$ admits an infinite order element, then $\log(n) \preceq \Farb_{G}(n)$ automatically. Thus, in order to have a nontrivial result, we must have that $2^n \preceq f(n)$. Finally, \cite[3.K1]{asymptotic_group} implies that if $2^{2^n} \preceq f(n)$, then there would be more than exponentially many points in $n$-balls of $G$ which is impossible. In particular, the strict distortion of any element is at most $2^{2^n}$ which implies the necessity of the upper bound for $f(n)$.

Our first application is to cocompact lattices in $\Sol$ and in solvable Lie groups of the form $\R^d \rtimes_M \R$ where $M$ is a positive definite matrix with all eigenvalues not equal to $1$. Cocompact lattices in these solvable Lie groups were the first class of polycyclic non-virtually nilpotent groups for which quasi-isometric rigidity results were announced, and in particular, these groups form an interesting class of residually finite, finitely generated solvable groups to study effective residually finiteness for. We have by the top of \cite[page 1684]{dymarz}  that if $G$ is a cocompact lattice in $\R^d \rtimes_M \R$, then each element of $G \cap \R^d$ is strictly exponentially distorted, and since $G \cap \R^d \cong \Z^d$, we are able to apply Theorem \ref{lower_bound_rf}. More generally, $\Sol$ and the solvable Lie groups $\R^d \rtimes_M \R$ are examples of what are known as nondegenerate, split abelian by abelian Lie groups for which similar statements can be made (see \cite{peng1, peng2} for a precise definition). Hence, we have the following corollary.
\begin{cor}
Let $G$ be a cocompact lattice in $\Sol$ or in $\R^d \rtimes_M \R$ where $M$ is a positive definite matrix with all eigenvalues not equal to $1$. Then $n \preceq \Farb_{G}(n)$. More generally, if $G$ is a cocompact lattice in a connected, simply connected, nondegenerate, split abelian by abelian solvable Lie group, then $n \preceq \Farb_{G}(n)$. 
\end{cor}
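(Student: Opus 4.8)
The plan is to deduce both statements from Theorem~\ref{lower_bound_rf} applied with $f(n) = 2^n$, for which it suffices to check the hypotheses: $G$ is residually finite, finitely generated, solvable, not virtually nilpotent, and contains an infinite order, strictly $2^n$-distorted element in its Fitting subgroup. Since $2^n \preceq 2^n \preceq 2^{2^n}$, the theorem then yields $\log(2^n) = n \preceq \Farb_G(n)$ (we only need the conclusion for nilpotent depth $\geq 1$, so nothing about deeper nilpotent subgroups is required).

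Consider first $G$ a cocompact lattice in $L = \R^d \rt_M \R$; the case $L = \Sol$ is included, as $\Sol \cong \R^2 \rt_M \R$ for a suitable positive definite $M$ with eigenvalues $\neq 1$, and in any event is covered by the general statement below. Because $L$ is a connected, simply connected solvable Lie group, $G$ is polycyclic (Mostow), hence finitely generated and solvable, and polycyclic groups are residually finite (Hirsch), so the first three hypotheses hold. For the distorted element: by the top of \cite[page 1684]{dymarz}, the subgroup $A := G \cap \R^d$ is isomorphic to $\Z^d$ and every nontrivial element of $A$ is strictly exponentially distorted in $G$, i.e.\ strictly $2^n$-distorted. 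Since $A$ is a normal abelian --- hence normal nilpotent --- subgroup of the polycyclic group $G$, it lies in $\text{Fitt}(G)$; fixing any nontrivial $x \in A$ supplies an infinite order, strictly $2^n$-distorted element of $\text{Fitt}(G)$. Finally, $G$ is not virtually nilpotent: cyclic subgroups of finitely generated virtually nilpotent groups have at most polynomial distortion, whereas $\innp{x}$ has exponential distortion. Theorem~\ref{lower_bound_rf} now gives $n \preceq \Farb_G(n)$.

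For the general statement, let $G$ be a cocompact lattice in a connected, simply connected, nondegenerate, split abelian by abelian solvable Lie group $L$. As before $G$ is polycyclic, hence finitely generated, solvable, and residually finite. By \cite{peng1, peng2}, the splitting of $L$ distinguishes a normal abelian subgroup of $L$ whose intersection $A$ with $G$ is a lattice in it --- so $A \cong \Z^d$ for some $d \geq 1$ --- and nondegeneracy guarantees that every nontrivial element of $A$ is strictly exponentially distorted in $G$. Exactly as above, $A \subseteq \text{Fitt}(G)$, any nontrivial $x \in A$ is the required infinite order, strictly $2^n$-distorted element, $G$ is not virtually nilpotent, and Theorem~\ref{lower_bound_rf} with $f(n) = 2^n$ yields $n \preceq \Farb_G(n)$.

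Almost all of this is routine structure theory of polycyclic groups together with facts imported from \cite{dymarz} and \cite{peng1, peng2}; the one point needing care is the general abelian by abelian case, namely pinning down the correct normal abelian subgroup of $L$, verifying that its trace on $G$ is an infinite finitely generated subgroup all of whose nontrivial elements are strictly exponentially distorted, and confirming that it sits inside $\text{Fitt}(G)$. This is precisely what \cite{peng1, peng2} provide, so the main obstacle here is careful bookkeeping rather than a genuinely new difficulty.
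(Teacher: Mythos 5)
Your proposal is correct and follows essentially the same route as the paper: the paper's justification for this corollary is exactly the paragraph preceding it, which invokes \cite[page 1684]{dymarz} for the strict exponential distortion of $G \cap \R^d \cong \Z^d$ and \cite{peng1, peng2} for the general nondegenerate split abelian-by-abelian case, then applies Theorem \ref{lower_bound_rf} with $f(n) = 2^n$. Your additional verifications (polycyclicity via Mostow, residual finiteness via Hirsch, normality of $G \cap \R^d$ placing it in $\text{Fitt}(G)$, and ruling out virtual nilpotence via the exponentially distorted cyclic subgroup) are correct fillings-in of details the paper leaves implicit.
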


For a connected, simply connected, non-nilpotent, upper triangular Lie group $\textbf{G}$ with nilradical $\textbf{N}$ and exponential radical $\Exp(\textbf{G})$ (see \cite{dimension_asymptotic_cone_exponential, osin_exponential} for the definition of exponential radical), we have by using  \cite[Lemma 2.5]{dimension_asymptotic_cone_exponential} and basic facts found in \cite{auslander, Dekimpe, discrete_subgroups_rag} that if $G$ is a cocompact lattice in $\textbf{G}$, then there exists a natural number $m$ such that if $\Exp(\textbf{G}) \cap \ga_m(\textbf{N})$ is a nontrivial, connected, closed Lie subgroup of $\textbf{G}$ that admits $G \cap \Exp(\textbf{G}) \cap \ga_m(\textbf{N})$ as a cocompact lattice. In particular, we have that $G \cap \Exp(\textbf{G}) \cap \ga_m(\textbf{N})$ is a nontrivial subgroup of $G$ with elements of strict exponential distortion of nilpotent depth $m$. Hence, we have the following corollary.
\begin{cor}
Let $\textbf{G}$ be a connected, simply connected, non-nilpotent, triangular Lie group with nilradical $\textbf{N}$ and exponential radical $\Exp(\textbf{G})$, and suppose that $\textbf{G}$ admits a cocompact lattice $G$. If $m$ is the largest natural number such that $\Exp(\textbf{G}) \cap \ga_m(\textbf{N}) \neq \{1\}$, then $n \preceq \Farb_G(n)$, and if $m > 1$, then $n^{m+1} \preceq \Farb_{G}(n)$.
\end{cor}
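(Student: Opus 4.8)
The plan is to bound $\Farb_{G}$ below in the standard way: for each $n$ produce a single nontrivial element $g_n$ with $\norm{g_n}_S \le n$ such that every finite quotient $\map{\vp}{G}{Q}$ with $\vp(g_n) \neq 1$ has $\abs{Q}$ at least the asserted size. The natural candidate is a power of the distorted element, but \emph{which} power matters: take $g_n = x^{L_n}$ with $L_n = \lcm(1,2,\dots,r_n)$, where $r_n$ is the largest integer for which $\norm{x^{\lcm(1,\dots,r_n)}}_S \le n$. Since $x$ has infinite order, $g_n \neq 1$; Chebyshev's bound $\log \lcm(1,\dots,r) \asymp r$ gives $\log L_n \asymp r_n$, and strict $f$-distortion forces $\norm{x^k}_S$ to be comparable to a quasi-inverse of $f$ evaluated at a constant multiple of $k$, so $r_n \asymp \log(f(n))$. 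The reason for the $\lcm$ is the divisibility trick: if $\vp(g_n) \neq 1$ then $\operatorname{ord}_Q(\vp(x)) \nmid \lcm(1,\dots,r_n)$, so there is a prime $p$ such that, writing $p^a$ for the exact power of $p$ dividing $\operatorname{ord}_Q(\vp(x))$, one has $p^a > r_n \asymp \log(f(n))$.

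This already yields the first inequality: $\abs{Q} \ge \operatorname{ord}_Q(\vp(x)) \ge p^a > r_n$, so $\log(f(n)) \preceq \Farb_{G}(n)$, and since $2^n \preceq f(n)$ we get $n \preceq \Farb_{G}(n)$.

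For the depth-$m$ refinement I would exploit the nilpotent structure of a witness for the depth. Fix a normal nilpotent $N \nsub G$ with $\innp{x} \cap \ga_m(N) \neq \set{1}$; after replacing $x$ by a suitable power (still of infinite order, still strictly $f$-distorted up to a change of constants, still of depth $m$) assume $x \in \ga_m(N)$, and note that since $m$ is the depth, hence maximal, $\innp{x} \cap \ga_{m+1}(N) = \set{1}$, so $x \notin \ga_{m+1}(N)$. Now $\vp(N)$ is finite nilpotent, $\vp(x) \in \ga_m(\vp(N))$, and if $P$ denotes the Sylow $p$-subgroup of $\vp(N)$ then the $p$-part $y$ of $\vp(x)$ lies in $\ga_m(P)$ with $\operatorname{ord}(y) = p^a$, whence $\abs{Q} \ge \abs{P}$. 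The step I expect to be the main obstacle is to arrange, at this point, that $P$ has nilpotency class at most $m$ (equivalently $\ga_{m+1}(P) = 1$) while $y$ still has order $p^a$ in $\ga_m(P)$: an arbitrary detector of $g_n$ need not annihilate $\ga_{m+1}(N)$, and a priori the large prime power $p^a$ could be carried by the part of $\vp(x)$ sitting in $\ga_{m+1}(\vp(N))$ rather than by its genuine depth-$m$ contribution. Overcoming this should use the maximality of $m$ (so that $x$ itself lies outside $\ga_{m+1}(N)$) together with a careful choice of the witnessing element, or passage to suitable sub- and quotient groups of $G$ while checking that the distortion and residual finiteness hypotheses survive; this is the point at which the restrictions on $f$ and on the depth are really being used.

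Granting that reduction, the conclusion follows from a self-contained fact about finite $p$-groups: if $P$ is a finite $p$-group with $\ga_{m+1}(P) = 1$ and $y \in \ga_m(P)$ has order $p^a$, with $m \ge 2$, then $\abs{P} \ge p^{(m+1)a}$. I would prove this through the associated graded Lie ring $\bigoplus_{j} \ga_j(P)/\ga_{j+1}(P)$. The surjections $\ga_1(P)/\ga_2(P) \otimes \ga_j(P)/\ga_{j+1}(P) \twoheadrightarrow \ga_{j+1}(P)/\ga_{j+2}(P)$ show that the exponent of $\ga_j(P)/\ga_{j+1}(P)$ is nonincreasing in $j$, and since the bottom piece $\ga_m(P)/\ga_{m+1}(P)$ contains $y$ it has exponent at least $p^a$; hence $\abs{\ga_j(P)/\ga_{j+1}(P)} \ge p^a$ for every $j \le m$. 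Moreover $\ga_m(P)$ (which here equals $\ga_m(P)/\ga_{m+1}(P)$) is a quotient of the degree-$m$ part of the free Lie ring on $P^{\mathrm{ab}} = \ga_1(P)/\ga_2(P)$, and for $m \ge 2$ the exponent of that free-Lie piece is the second-largest invariant factor of $P^{\mathrm{ab}}$; so $P^{\mathrm{ab}}$ must have two invariant factors of exponent $\ge p^a$, giving $\abs{P^{\mathrm{ab}}} \ge p^{2a}$. Multiplying the orders of the graded pieces, $\abs{P} \ge p^{2a} \cdot (p^a)^{m-1} = p^{(m+1)a}$. Feeding this back, $\abs{Q} \ge \abs{P} \ge p^{(m+1)a} \ge r_n^{m+1} \asymp (\log f(n))^{m+1}$, so $(\log f(n))^{m+1} \preceq \Farb_{G}(n)$ and, using $2^n \preceq f(n)$ once more, $n^{m+1} \preceq \Farb_{G}(n)$. (For $m = 1$ this $p$-group input degenerates, since a finite abelian $p$-group with an element of order $p^a$ can have order exactly $p^a$, which is precisely why one recovers only $\log(f(n))$ in that case.)
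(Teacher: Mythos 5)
The statement is a corollary of Theorem \ref{lower_bound_rf}, and the paper's proof of it consists almost entirely of verifying that a cocompact lattice $G$ in a connected, simply connected, non-nilpotent triangular solvable Lie group $\textbf{G}$ satisfies the hypotheses of that theorem with $f(n) = 2^n$ and nilpotent depth $m$. Concretely: $G$ is polycyclic (hence residually finite, finitely generated, solvable) and not virtually nilpotent because $\textbf{G}$ is non-nilpotent; and by the cited lemma on exponential radicals together with standard facts about lattices in solvable Lie groups, $\Exp(\textbf{G}) \cap \ga_m(\textbf{N})$ is a nontrivial closed connected subgroup that admits $G \cap \Exp(\textbf{G}) \cap \ga_m(\textbf{N})$ as a cocompact lattice, so $G$ contains an infinite-order, strictly exponentially distorted element of nilpotent depth $m$ inside $\text{Fitt}(G)$. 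Your proposal contains none of this. You never touch the Lie-theoretic data ($\textbf{G}$, $\textbf{N}$, $\Exp(\textbf{G})$, cocompactness); you simply posit a strictly $f$-distorted $x \in \text{Fitt}(G)$ of depth $m$ --- which is precisely the input that the corollary asks you to produce from those hypotheses. That omission is the main gap.

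What you wrote is instead a re-proof of Theorem \ref{lower_bound_rf} itself, and even taken as such it has the hole you flag yourself. Taking $L_n = \lcm(1,\dots,r_n)$ and trying to conclude $\abs{Q} \ge p^{(m+1)a}$ requires a structural fact about finite $p$-groups of class exactly $m$, but you give no reduction to $\ga_{m+1}(P) = \set{1}$, and the assertion that the exponent of the degree-$m$ component of the free Lie ring on $P^{\mathrm{ab}}$ equals the second invariant factor of $P^{\mathrm{ab}}$ is asserted without proof. The paper sidesteps both difficulties with a different normalization: it raises the least common multiple to the $(m+2)$-nd power, $\al_i = \lcm\set{1,\dots,p_i-1}^{m+2}$, first reduces via Proposition \ref{special_p_by_abelian_quotient} to the case that $\text{Fitt}(H)$ of a witness $H$ is a $q$-group, and then runs a short case analysis on $\abs{\text{Fitt}(H)}$ against $p_i$ and on whether $q < p_i$ or $q \ge p_i$, using only the crude Lemma \ref{p_dim_nilpotent_length} (a finite $p$-group of class $c > 1$ has order at least $p^{c+1}$) through Proposition \ref{p_dim_lower_bound}. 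The extra power $m+2$ on the $\lcm$ absorbs the slack that your finer $p$-group estimate would otherwise need to supply. But even if that finer estimate were established, it would not prove the corollary until the Lie-theoretic reduction is carried out.
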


We have the following corollary for the more general class of polycyclic groups that admit a strictly exponentially distorted element in the Fitting subgroup. We note for polycyclic groups that the Fitting subgroup is always a nontrivial nilpotent group.
\begin{cor}\label{polycyclic_cor}
If $G$ is an infinite polycyclic group with a nontrivial strictly exponentially distorted element $x \in \text{Fitt}(H)$, then $n \preceq \Farb_{G}(n)$. If $x \in \ga_m(\text{Fitt}(G))$ where $m > 1$, then $n^{m+1} \preceq \Farb_G(n)$.
\end{cor}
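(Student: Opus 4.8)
The plan is to deduce the corollary directly from Theorem \ref{lower_bound_rf} by verifying that its hypotheses hold. There are essentially three things to check: that $G$ is not virtually nilpotent, that the distorted element $x$ lies in $\text{Fitt}(G)$ (which it does by assumption), and that one can take $f(n) = 2^n$, which lies in the admissible range $2^n \preceq f(n) \preceq 2^{2^n}$. The final bullet about $x \in \ga_m(\text{Fitt}(G))$ requires translating this into the ``nilpotent depth $m$'' condition from the statement of the theorem.

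First I would observe that an infinite polycyclic group admitting a strictly exponentially distorted element cannot be virtually nilpotent: in a virtually nilpotent (equivalently, finitely generated nilpotent-by-finite) group every element of the Fitting subgroup is at most polynomially distorted, since distortion functions of subgroups of finitely generated nilpotent groups are polynomial. This contradicts strict exponential distortion of $x$, so $G$ is not virtually nilpotent and the first hypothesis of Theorem \ref{lower_bound_rf} is satisfied. (Polycyclic groups are residually finite by Hirsch's theorem, so the residual finiteness hypothesis is automatic.) Next, strict exponential distortion of $x$ means precisely that $x$ is strictly $f$-distorted for $f(n) = 2^n$, and this $f$ is nondecreasing with $2^n \preceq f(n) \preceq 2^{2^n}$; note $\log(f(n)) \approx n$. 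Applying the first assertion of Theorem \ref{lower_bound_rf} with this choice of $f$ gives $n \approx \log(f(n)) \preceq \Farb_G(n)$, which is the first claim.

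For the second claim, I need to connect $x \in \ga_m(\text{Fitt}(G))$ with the definition of nilpotent depth $m$ given before Theorem \ref{lower_bound_rf}, namely that there is a normal nilpotent subgroup $N \nsub G$ with $\innp{x} \cap \ga_m(N) \neq \{1\}$. Since $\text{Fitt}(G)$ is itself a normal nilpotent subgroup of the polycyclic group $G$, one may simply take $N = \text{Fitt}(G)$; then $x \in \ga_m(\text{Fitt}(G))$ gives $x = x^1 \in \innp{x} \cap \ga_m(N)$, a nontrivial element, so $x$ has nilpotent depth (at least) $m$. Here I am using that in a polycyclic group the Fitting subgroup is a nontrivial nilpotent normal subgroup, as remarked just before the corollary. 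Then the second assertion of Theorem \ref{lower_bound_rf} yields $(\log(f(n)))^{m+1} \preceq \Farb_G(n)$, and since $\log(f(n)) \approx n$ this is $n^{m+1} \preceq \Farb_G(n)$.

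The only genuinely nonroutine point is the first step: confirming that strict exponential distortion of an element forces $G$ to fail to be virtually nilpotent. This rests on the standard fact that subgroups of finitely generated nilpotent groups have polynomially bounded distortion, together with the observation that virtual nilpotence of a polycyclic group is inherited by a finite-index nilpotent subgroup on which the relevant power cycle essentially lives; I would cite the appropriate reference (e.g., the distortion estimates in \cite{BM11} or the standard literature on distortion in nilpotent groups) rather than reprove it. Everything else is bookkeeping against the definitions already set up in the introduction.
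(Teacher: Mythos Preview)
Your proposal is correct and follows exactly the approach the paper intends: the corollary is stated without proof because it is meant to be an immediate application of Theorem~\ref{lower_bound_rf}, and you have verified the hypotheses (residual finiteness via Hirsch, non-virtual-nilpotence from the existence of an exponentially distorted element, and nilpotent depth via the fact---noted just before the corollary---that $\text{Fitt}(G)$ is itself nilpotent for polycyclic $G$). Your write-up is in fact more detailed than what the paper provides, but the route is the same.
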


Our next application is to finitely generated metabelian groups which by Hall \cite[Theorem 1]{metabelian} are always residually finite. By \cite{metabelian_rem}, we have that every finitely generated metabelian group is linear over a finite product of fields, and in particular, if $G$ is virtually torsion free, then $G$ is linear over $\C$. Thus, \cite[Theorem 1.1]{BM12} implies that $\Farb_{G}(n) \preceq n^k$ for some natural number $k$.  Therefore, the best lower bound we can obtain for this class of residually finite, finitely generated solvable groups using Theorem \ref{lower_bound_rf} is polynomial which can be seen in the following corollary. For this corollary, we let $\text{BS}(k,m) = \innp{x,t \: | \: t x^k t^{-1} = x^m}$ be the Baumslag-Solitar group. We note that $\text{BS}(1,m)$ for $m > 1$ is a metabelian group that is not virtually nilpotent.
\begin{cor}
Let $G$ be a finitely generated metabelian group with a nontrivial element $x \in \text{Fitt}(G)$ that is strictly exponentially distorted. Then $n \preceq \Farb_G(n)$. In particular, $n \preceq \Farb_{\text{BS}(1,m)}(n)$ for $m > 1$.
\end{cor}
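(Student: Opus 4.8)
The plan is to deduce the corollary directly from Theorem \ref{lower_bound_rf}, so the work is almost entirely a matter of checking hypotheses. For the first assertion, suppose $G$ is a finitely generated metabelian group with a nontrivial, strictly exponentially distorted element $x \in \text{Fitt}(G)$. First, $G$ is residually finite by Hall's theorem \cite{metabelian} and is solvable of derived length at most $2$. Second, $G$ is not virtually nilpotent: cyclic subgroups of finitely generated virtually nilpotent groups are at most polynomially distorted, so such a group cannot contain a strictly exponentially distorted element. Third, $x$ has infinite order (strict distortion is only defined for infinite order elements) and lies in $\text{Fitt}(G)$ by hypothesis. Finally, taking $f(n) = 2^n$ we have $2^n \preceq f(n) \preceq 2^{2^n}$, and the assumption that $x$ is strictly exponentially distorted means precisely that $x$ is strictly $f$-distorted for this $f$. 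Theorem \ref{lower_bound_rf} now gives $n \approx \log(f(n)) \preceq \Farb_G(n)$.

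For the final clause I would verify that $G = \text{BS}(1,m) = \innp{x, t \: | \: t x t^{-1} = x^m}$ with $m > 1$ meets the hypotheses of the first assertion. It is generated by $\{x,t\}$, and its derived subgroup is the normal closure $\least{x}$, which is isomorphic to the abelian group $\Z[1/m]$; hence $G$ is finitely generated metabelian. The element $x$ has infinite order, and since $\least{x} \cong \Z[1/m]$ is an abelian, hence nilpotent, normal subgroup of $G$, we get $x \in \text{Fitt}(G)$. It remains to see that $x$ is strictly exponentially distorted with respect to $S = \{x,t\}$: the upper bound $\|x^N\|_S \preceq \log N$ comes from the relations $t^j x t^{-j} = x^{m^j}$ applied to a (signed) base-$m$ expansion of $N$, and the matching lower bound $\log N \preceq \|x^N\|_S$ comes from the faithful representation sending $x$ to the unipotent matrix $\begin{pmatrix} 1 & 1 \\ 0 & 1 \end{pmatrix}$ and $t$ to $\begin{pmatrix} m & 0 \\ 0 & 1 \end{pmatrix}$, under which $x^N$ has an entry of absolute value $N$ while any length-$\ell$ word in $S$ has entries of absolute value at most $(2m)^\ell$. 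Thus $x$ is strictly $f$-distorted for $f(n) = m^n$, which again satisfies $2^n \preceq f(n) \preceq 2^{2^n}$, and the first assertion applies to give $n \preceq \Farb_{\text{BS}(1,m)}(n)$.

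The proof is essentially routine once Theorem \ref{lower_bound_rf} is available; the only place demanding any care is the bookkeeping needed to confirm that $x$ in $\text{BS}(1,m)$ is distorted in the strict two-sided sense of the definition rather than merely exponentially distorted in a one-sided sense --- but both the upper and lower bounds above are classical and can be made to track the required constants, and identifying $\least{x}$ as the derived subgroup (and hence placing $x$ inside $\text{Fitt}(G)$) is immediate from the presentation.
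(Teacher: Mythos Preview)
Your proposal is correct and follows exactly the approach the paper intends: the corollary is stated in the introduction as a direct application of Theorem~\ref{lower_bound_rf}, with the paper supplying only the remark that finitely generated metabelian groups are residually finite by Hall and that $\text{BS}(1,m)$ is metabelian and not virtually nilpotent. You have simply filled in the hypothesis-checking (including the standard two-sided distortion estimate for $x$ in $\text{BS}(1,m)$) that the paper leaves implicit.
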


For this last corollary, we say that a finitely generated group $G$ has Pr\"{u}fer rank $r$ if every finitely generated subgroup of $G$ can be generated by at least $r$ elements and $r$ is the least such natural number. Otherwise, we say that $G$ has infinite Pr\"{u}fer rank.
\begin{cor}
Let $G$ be a residually finite, finitely generated solvable group of infinite Pr\"{u}fer rank with a nontrivial infinite order element $x \in \text{Fitt}(G)$ such that $x$ is at least strictly exponentially distorted. Then $n \preceq \Farb_{G}(n)$. If $x$ has nilpotent depth $m>1$, then $n^{m+1} \preceq \Farb_{G}(n)$.
\end{cor}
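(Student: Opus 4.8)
The plan is to verify that $G$ satisfies the hypotheses of Theorem \ref{lower_bound_rf} and then read off the two claimed bounds. We are already handed the fact that $G$ is residually finite, finitely generated, and solvable, together with a nontrivial infinite order element $x \in \text{Fitt}(G)$. So the only two points that need comment are (i) that $G$ is not virtually nilpotent, and (ii) that the strict distortion of $x$ can be realized by a function $f$ lying in the admissible window $2^n \preceq f(n) \preceq 2^{2^n}$.

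For (i), I would argue the contrapositive: a finitely generated virtually nilpotent group has finite Pr\"{u}fer rank. Suppose $N \leq G$ is a finitely generated nilpotent subgroup of finite index. Then $N$ is polycyclic, hence admits a subnormal series with cyclic factors whose length is the Hirsch length $h$ of $N$; intersecting this series with any subgroup $K \leq N$ produces a series for $K$ with cyclic factors, so $K$ is generated by at most $h$ elements, and $N$ has Pr\"{u}fer rank at most $h$. If now $K$ is an arbitrary finitely generated subgroup of $G$, then $K \cap N$ has index at most $[G:N]$ in $K$, so $K \cap N$ is finitely generated and is generated by at most $h$ elements; adjoining coset representatives shows $K$ is generated by at most $h + [G:N]$ elements. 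Thus $G$ has finite Pr\"{u}fer rank, contrary to hypothesis, so $G$ is not virtually nilpotent.

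For (ii), since $x$ is at least strictly exponentially distorted, there is a nondecreasing $f \colon \N \to \N$ with $2^n \preceq f(n)$ such that $x$ is strictly $f$-distorted. The upper bound $f(n) \preceq 2^{2^n}$ is automatic: as noted just after Theorem \ref{lower_bound_rf}, \cite[3.K1]{asymptotic_group} forces the distortion of any element of a finitely generated group to be at most doubly exponential, as otherwise $n$-balls would contain more than exponentially many points. Hence Theorem \ref{lower_bound_rf} applies and yields $\log(f(n)) \preceq \Farb_G(n)$; since $2^n \preceq f(n)$ gives $n \preceq \log(f(n))$, we conclude $n \preceq \Farb_G(n)$. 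If $x$ has nilpotent depth $m > 1$, the theorem gives $(\log(f(n)))^{m+1} \preceq \Farb_G(n)$, and the same comparison yields $n^{m+1} \preceq (\log(f(n)))^{m+1} \preceq \Farb_G(n)$.

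In short, once Theorem \ref{lower_bound_rf} is available the corollary is essentially immediate, and the only genuine content is the elementary observation in (i) that infinite Pr\"{u}fer rank precludes virtual nilpotence; that is the step I would write out with care, the remainder being a direct invocation of the main theorem together with the doubly-exponential ceiling on distortion.
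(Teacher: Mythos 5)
Your proof is correct and follows the approach the paper intends: the corollary is presented as a direct application of Theorem \ref{lower_bound_rf}, and your verification in (i) that infinite Pr\"{u}fer rank precludes virtual nilpotence (via the polycyclic structure of a finite-index nilpotent subgroup, intersecting the cyclic series with an arbitrary subgroup, and then passing up through a finite index) supplies the one hypothesis not handed to you directly. A cosmetic slip: the number of generators for a subgroup of $N$ should be bounded by the polycyclic length of $N$ (the number of cyclic factors in a subnormal series), not its Hirsch length $h$, since a finitely generated nilpotent group with torsion can have more cyclic factors than Hirsch length; the finiteness of the bound is all that matters, so the argument stands.
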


The proof of Theorem \ref{lower_bound_rf} proceeds by finding an infinite sequence of elements $\set{g_i}$ in $G$ whose minimal finite witness has order at least $\log(f(\|g_i\|))$. Similarly, when the distorted element has nilpotent depth $m > 1$ in $N$, we construct a sequence of elements $\{g_i\}$ such that the minimal finite witness of $g_i$ has order at least $(\log(f(\|g_i\|)))^{m+1}$. We first show that we may assume that the Fitting subgroup of any finite witness of the elements in consideration is a finite $p$-group for some prime $p$.  We then give conditions for the Fitting subgroup of any finite witness of a nontrivial element in $\text{Fitt}(G)$ to have order at least $p^{m+1}$ when the nilpotent depth of $x$ is $m > 1$. We finish by choosing an element $x$ that is strictly $f$-distorted and a sequence of integers $\set{\ell_i}$ using the Prime Number Theorem so that our desired sequence of elements is given by $\set{x^{\ell_i}}$.
\\

\noindent \textbf{Acknowledgments} We thank Ben McReynolds, Jean-Fran\c{c}ois Lafont, Yves de Cornulier, David Anderson, Tullia Dymarz, and Jim Cogdell for useful discussions regarding this article.

\section{Background}
\subsection{Notation} We denote $\lcm\set{1,\cdots,k}$ as the least common multiple of the natural numbers $1$ to $k$. We denote $1$ as the identity element of any group. For a finite group $G$, we denote $|G|$ as its cardinality. For $x \in G$, we denote $\text{Ord}_G(x)$ as the order of $x$ as an element of $G$. For a finitely generated group $G$ with a finite generating subset $S$, we denote $\|x\|_S$ as the word length of $x$ in $G$ with respect to the generating subset $S$, and we write $\|x\|$ when the generating subset is understood from context. We let $G^{(i)}$ be the $i$-th step of the derived series and $\ga_i(G)$ be the $i$-th step of the lower central series. For a finitely generated solvable group $G$, we denote $\text{Fitt}(G)$ as the Fitting subgroup of $G$.
\subsection{Separability}
We define the depth function $\map{\D_G}{G \: \backslash \set{1}}{\N \cup \{\infty\}}$ of $G$ to be given by
$$
\D_{G} (x) = \text{min}\set{|H| \: |  \: \varphi: G \rightarrow H, |H| < \infty, \text{ and } \varphi (x) \neq 1}
$$
with the understanding that $\D_{G}(x) = \infty$ if no such finite group $H$ exists.
\begin{defn}
	Let $G$ be a finitely generated group. We say that $G$ is \textbf{residually finite} if $\text{D}_G(x) < \infty$ for all $x \in G \:  \backslash \set{1}$.
\end{defn}
We define
$
\map{\Farb_{G,S}}{\mathbb{N}}{\mathbb{N}}
$
as
$$
\Farb_{G,S} (n) = \text{max}\set{\D_{G} (x) | \|x\|_S \leq n \text{ and } x \neq 1}.$$ For any two finite generating subsets $S_1$ and $S_2$, we have 
$ \Farb_{G,S_1} (n) \approx \Farb_{G,S_2} (n)$ (see \cite{BouRabee10}). Hence, we suppress reference to the generating subset in $\Farb_{G,S}(n)$ when it is clear from context.

\subsection{Solvable Groups}
For this next section, see \cite{robinson,Segal,tessera,wehrfritz} for a more thorough discussion about solvable groups.

We define the first term of the \textbf{derived series} of $G$ as $G^{(1)} \bdef G$, and for $i > 1$, we define inductively the $i$-th term of the derived series as $G^{(i)} \bdef [G^{(i-1)}, G^{(i-1)}]$. 

\begin{defn}
Let $G$ be a finitely generated group. We say that $G$ is a \textbf{solvable group of derived length $s$} if $s$ is the smallest natural number such that $G^{(s+1)} = \set{1}$, and when the derived length is unspecified, we just say that $G$ is a solvable group. We say that $G$ is a \textbf{nilpotent group of step length $c$} if $c$ is the smallest natural number such that $\ga_{c+1}(G) = \set{1}$. As before, if the step length is unspecified, then we say that $G$ is a nilpotent group. We say a finitely generated solvable group is a \textbf{polycyclic group} if every subgroup is finitely generated.
\end{defn}

We now introduce the Fitting subgroup of a finitely generated solvable group which will be an essential tool in the proof of Theorem \ref{lower_bound_rf}.
\begin{defn}
Let $G$ be a finitely generated solvable group. The \textbf{Fitting subgroup of $G$}, denoted $\text{Fitt}(G)$, is the characteristic subgroup generated by all normal nilpotent subgroups of $G$
\end{defn}
Since the last nontrivial term of the derived series of any solvable group $G$ is a normal abelian subgroup, we have that $\text{Fitt}(G) \neq \set{1}$.  When $G$ is a finite solvable group or more generally a polycyclic group, we have that $\text{Fitt}(G)$ is a finitely generated nilpotent group. However, for a general finitely generated solvable group, it is not necessarily the case that either $\text{Fitt}(G)$ is nilpotent or even a finitely generated group.

We finish this section with the follow definition.
\begin{defn}
Let $G$ be a finitely generated solvable group with a nontrivial element $x \in \text{Fitt}(G)$. We say that $x$ has \textbf{nilpotent depth} $m$ if there exists a normal nilpotent subgroup $N$ such that $m$ is the largest natural number where $\innp{x} \cap \in \ga_m(N) \neq \set{1}$. 
\end{defn}

\section{Finite quotients of solvable groups}
We start with the following lemma that relates the cardinality of a finite $p$-group with its step length.
\begin{lemma}\label{p_dim_nilpotent_length}
	Let $p$ be a prime number. If $Q$ is an abelian finite $p$-group, then $|Q| \geq p$. If $Q$ is a finite $p$-group of step length $c>1$, then $|Q| \geq p^{c+1}$.
\end{lemma}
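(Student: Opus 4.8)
The first assertion is immediate: a nontrivial finite $p$-group has order a positive power of $p$ and hence order at least $p$, and a group that is abelian of step length $1$ is in particular nontrivial. For the second assertion, the plan is to split $|Q| = |Q/Q'| \cdot |Q'|$ and bound the two factors separately, squeezing one factor of $p$ out of the abelianization $Q/Q'$ and the remaining $c - 1$ factors out of the lower central series of $Q$.

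First I would record the standard fact that if $Q$ is nilpotent of class exactly $c$, then $\ga_i(Q) \neq \ga_{i+1}(Q)$ for every $1 \leq i \leq c$. Indeed, if $\ga_i(Q) = \ga_{i+1}(Q)$ for some such $i$, then applying $[\,\cdot\,, Q]$ repeatedly gives $\ga_i(Q) = \ga_j(Q)$ for all $j \geq i$; taking $j = c$ yields $\ga_c(Q) = \ga_{c+1}(Q) = \set{1}$, contradicting that the class is $c$. Consequently, for each $2 \leq i \leq c$ the quotient $\ga_i(Q)/\ga_{i+1}(Q)$ is a nontrivial finite $p$-group, so it has order at least $p$. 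Since $\ga_{c+1}(Q) = \set{1}$, we get
\[
|Q'| = |\ga_2(Q)| = \prod_{i=2}^{c} \bigl|\ga_i(Q)/\ga_{i+1}(Q)\bigr| \geq p^{\,c-1}.
\]

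Next I would show $|Q/Q'| \geq p^2$. Because $c > 1$, the group $Q$ is nonabelian, hence not cyclic, so its minimal number of generators satisfies $d(Q) \geq 2$. By the Burnside basis theorem, $d(Q) = \dim_{\mathbb{F}_p} Q/\Phi(Q)$, where for the $p$-group $Q$ the Frattini subgroup is $\Phi(Q) = Q' Q^p \supseteq Q'$; since $Q/\Phi(Q)$ is therefore a quotient of $Q/Q'$, we obtain $|Q/Q'| \geq |Q/\Phi(Q)| = p^{\,d(Q)} \geq p^2$. Combining this with the previous paragraph gives $|Q| = |Q/Q'| \cdot |Q'| \geq p^2 \cdot p^{\,c-1} = p^{\,c+1}$, as desired. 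The only step requiring an actual idea rather than bookkeeping with the lower central series is this bound $|Q/Q'| \geq p^2$, i.e.\ ruling out the case where the abelianization is cyclic of order $p$; one can equivalently argue directly that a $p$-group with cyclic abelianization is generated by a single element and hence cyclic, which is impossible when $c > 1$.
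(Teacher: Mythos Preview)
Your proof is correct. It takes a genuinely different route from the paper's, which argues by induction on the step length $c$: the inductive step is simply $|Q| = |Q/\ga_c(Q)|\cdot|\ga_c(Q)| \geq p^{c}\cdot p$, and the work is concentrated in the base case $c=2$, where the paper picks noncommuting $x,y$ and exhibits a subgroup $\langle x,y,[x,y]\rangle$ of order at least $p^3$. You instead split at the bottom of the lower central series, writing $|Q| = |Q/Q'|\cdot|Q'|$, and extract $|Q'|\geq p^{c-1}$ from the strict descent of the lower central series while getting the extra factor of $p$ from $|Q/Q'|\geq p^2$ via the Burnside basis theorem (equivalently, from the observation that a $p$-group with cyclic abelianization is cyclic). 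Your argument is uniform in $c$ and avoids an ad hoc base case; the paper's approach is more hands-on and does not need to invoke the Frattini subgroup, but the uniqueness claim in its base case is stated somewhat loosely (the product $\text{Ord}(x)\cdot\text{Ord}(y)\cdot\text{Ord}([x,y])$ need not equal $|H|$ exactly, only bound it from above), whereas your bookkeeping is airtight.
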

\begin{proof}
	Since the first statement is clear, we may assume that $Q$ has step length $c > 1$. We prove the second statement by induction on step length, and for the base case, we assume that $Q$ has step length $2$. There exist nontrivial elements $x,y \in Q$ such that $[x,y] \neq 1$, and since $Q$ has step length $2$, we have that $[x,y] \in [Q,Q] \leq Z(Q)$. For the subgroup $H = \innp{x,y,[x,y]}$, we have that each element  can be written uniquely as $x^t \: y^s \: [x,y]^\ell$ for natural numbers $0 \leq t < \text{Ord}_Q(x)$, $0 \leq s < \text{Ord}_Q(y)$, and $0 \leq \ell < \text{Ord}_Q([x,y])$. Therefore, we have that 
	$
	|H| = \text{Ord}_Q(x) \cdot \text{Ord}_Q(y) \cdot \text{Ord}_Q([x,y]) \geq p^3.
	$
	Thus, $|Q| \geq p^3$, and since $|H| \mid |Q|$, we have that $|Q| \geq p^3$.
	
	We now assume that $Q$ has step length $c > 2$. By induction, we have that $|Q/ Q_c| \geq p^c$, and since $Q_c$ is abelian, we have that $|Q_c| \geq p$. In particular, $|Q| = |Q / Q_c| \cdot |Q_c| \geq p^{c+1}.$
\end{proof}

Let $G$ be a finitely generated solvable group with a nontrivial element $x \in \text{Fitt}(G)$. The next proposition implies that if given a surjective group morphism $\varphi: G\rightarrow H$ where $\varphi(x) \neq 1$, then we may assume $\text{Fitt}(H)$ is a finite $p$-group for some prime $p$.

\begin{prop}\label{special_p_by_abelian_quotient}
	Let $G$ be a residually finite, finitely generated solvable group with a nontrivial element $g \in \text{Fitt}(G)$. Let $\varphi:G \rightarrow H$ be a surjective group morphism to a finite group where $\varphi(x) \neq 1$. Then there exists a prime number $p$ and a normal subgroup $K \nsub H$ such that $\text{Fitt}(H/K)$ is a finite $p$-group where $\varphi(x) \notin K$.
\end{prop}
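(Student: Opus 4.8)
The plan is to reduce $H$ first to a group whose Fitting subgroup is nilpotent — which is automatic here since $H$ is finite — and then to pass to a quotient on which $\Fitt$ becomes a $p$-group, all while keeping the image of $x$ nontrivial. Since $H$ is a finite solvable group, $\Fitt(H)$ is a nontrivial finite nilpotent group, hence a direct product $\prod_{q} \Fitt(H)_q$ of its Sylow subgroups over the primes $q$ dividing $|\Fitt(H)|$. Each $\Fitt(H)_q$ is characteristic in $\Fitt(H)$, hence normal in $H$. The key observation is that $x$, being in $\Fitt(G)$, lands in a normal nilpotent subgroup of $G$, so $\vp(x)$ lands in a normal nilpotent subgroup of $H$, hence $\vp(x) \in \Fitt(H)$. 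Therefore $\vp(x) = \prod_q y_q$ with $y_q \in \Fitt(H)_q$, and since $\vp(x) \neq 1$ there is at least one prime $p$ with $y_p \neq 1$.

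Next I would take $K$ to be the product $\prod_{q \neq p} \Fitt(H)_q$, which is a normal subgroup of $H$ (product of characteristic subgroups of $\Fitt(H)$, each normal in $H$, and they commute). Then $\vp(x) \notin K$ because its $p$-component $y_p$ is nontrivial while every element of $K$ has trivial $p$-component; concretely, the composite $G \to H \to H/K$ sends $x$ to a nontrivial element since modulo $K$ the class of $\vp(x)$ agrees with the class of $y_p \neq 1$. It remains to check that $\Fitt(H/K)$ is a finite $p$-group. Since $\Fitt(H)/K \cong \Fitt(H)_p$ is a normal $p$-subgroup of $H/K$, it is contained in $\Fitt(H/K)$. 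For the reverse containment one uses that for a quotient map $\pi: H \to H/K$ one always has $\pi(\Fitt(H)) \subseteq \Fitt(H/K)$, but equality need not hold in general; the cleaner route is to argue directly that $\Fitt(H/K)$ is a $p$-group. Suppose some prime $q \neq p$ divides $|\Fitt(H/K)|$; then $\Fitt(H/K)$ contains a nontrivial normal $q$-subgroup $\bar{L}$ of $H/K$, whose preimage $L$ in $H$ is a normal subgroup with $L/K$ a $q$-group. Since $\gcd(q, |K|)$ involves only primes $\neq p$ and $K$ is a nilpotent normal subgroup, $L$ is nilpotent (it is an extension of the nilpotent $q$-group $L/K$ by the nilpotent group $K$ whose order is coprime-to-$p$; more carefully, the Sylow $q$-subgroup of $L$ together with $K$'s $q$-part generates a normal nilpotent subgroup)—so $L \le \Fitt(H)$, forcing $L/K$ to be a subquotient of $\Fitt(H)/K \cong \Fitt(H)_p$, which has no elements of order $q$, a contradiction.

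The main obstacle is precisely this last paragraph: controlling $\Fitt(H/K)$ from above. The subtlety is that $\Fitt$ does not behave well under quotients in general, so one must exploit the special structure — namely that $K$ itself is a normal nilpotent subgroup of $H$ consisting exactly of the non-$p$ Sylow parts of $\Fitt(H)$, and that any normal subgroup of $H/K$ which is a $q'$-group for the relevant primes lifts to a normal nilpotent subgroup of $H$ that must therefore already sit inside $\Fitt(H)$. I would phrase this via the standard fact that in a finite group the Fitting subgroup is the set of elements generating, together with the existing $\Fitt$, a normal nilpotent subgroup, and that normality plus coprimality of the relevant orders forces the extension to split as a direct product of Sylow subgroups, each normal in $H$. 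Once that is set up the contradiction is immediate, and the proposition follows by taking the composite morphism $G \to H \to H/K$.
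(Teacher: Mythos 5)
Your choice of $K$ — the Hall $p'$-subgroup of $\text{Fitt}(H)$, where $p$ is a prime for which the $p$-component of $\varphi(x)$ is nontrivial — coincides with the paper's, and your observation that $\varphi(x)\in\text{Fitt}(H)$ is also how the paper begins. But the central claim that $\text{Fitt}(H/K)$ is then a $p$-group is false. A concrete counterexample: take $H = S_3 \times \Z/7$, so that $\text{Fitt}(H) = A_3 \times \Z/7 \cong \Z/{21}$, and suppose $\varphi(x)$ has order $7$, so $p = 7$ and $K = A_3 \times \{1\} \cong \Z/3$. Then $H/K \cong \Z/2 \times \Z/7$ is abelian, hence $\text{Fitt}(H/K) \cong \Z/{14}$ is not a $7$-group. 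Tracing your argument in this example shows where it breaks: the normal $2$-subgroup $\bar L \cong \Z/2$ of $H/K$ pulls back to $L = S_3 \times \{1\}$, which is normal in $H$ but not nilpotent, so it need not lie in $\text{Fitt}(H)$ and no contradiction arises. The parenthetical that $L$ ``is an extension of the nilpotent $q$-group $L/K$ by the nilpotent group $K$'' does not help — extensions of nilpotent by nilpotent are generally not nilpotent — and the ``more carefully'' variant (Sylow $q$-subgroup of $L$ together with $K$'s $q$-part) produces in this example a copy of $\Z/2 \le S_3$ that is not normal in $H$. The Fitting subgroup can genuinely grow under quotients, because killing $K$ can make previously non-normal subgroups become normal.

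The paper avoids this obstruction by arguing by induction on the composition length of $H$. After forming $H/K$ exactly as you do, it does not assert that $\text{Fitt}(H/K)$ is a $p$-group. Instead, it notes that $H/K$ has strictly smaller composition length and that $\varphi(x)K$ is a nontrivial element of $\text{Fitt}(H/K)$ (because $\text{Fitt}(H)/K$ is a nilpotent normal subgroup of $H/K$), so the inductive hypothesis supplies a further normal subgroup $W \supseteq K$ of $H$ with $\text{Fitt}(H/W)$ a $q$-group for some prime $q$ and $\varphi(x) \notin W$. The prime $q$ at the end of the induction need not equal your initial $p$, and several rounds of quotienting may be required. What your proposal is missing is exactly this iteration; as written, it asserts a one-step version of the statement that is simply not true.
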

\begin{proof}
	 Since $H$ is solvable, there exists a subnormal series of subgroups $\set{1} = H_0 \leq H_1 \leq \cdots \leq H_r = H$, known as a composition series, such that $|H_{i+1}/H_i| = p_i$ where $p_i$ is some prime. We proceed by induction on the length of the composition series which is given by $r$, and since the base case is clear, we may assume that $r > 1$.

Given that $x \in \text{Fitt}(G)$, there exists finitely many normal nilpotent subgroup $\set{N_i}_{i=1}^t$ such that $x \in N$ where $N = N_1 \cdot N_2 \cdots N_t$.  Thus, \cite[Theorem 2.5]{Hall_notes} implies that $N$ is a normal nilpotent subgroup of $G$.  Since $\varphi(g) \neq 1$, we have that $\varphi(N)$ is a nontrivial, normal nilpotent subgroup of $H$. Therefore, $\varphi(N)$ is a subgroup of $\text{Fitt}(H)$, and thus, $\varphi(x) \in \text{Fitt}(H)$. If $\text{Fitt}(H)$ is a finite $q$-group for some prime $q$, then we are done. Otherwise, we may assume that $|\text{Fitt}(H)| = \prod_{s=1}^k q_s^{t_s}$ where each $q_i$ is a prime. Since \cite[Theorem 2.7]{Hall_notes} implies that $\text{Fitt}(H) = \prod_{i=1}^{k}Q_i$ where $|Q_i| = q_i^{t_i}$, we may write $\varphi(x) = \pr{a_1,\cdots, a_k}$ where $a_i \in Q_i$, and given that $\varphi(x) \neq 1$, there exists some $i_0$ such that $a_{i_0} \neq 1$. Given that $K = \prod_{i=1, i \neq i_0}^k Q_i$ is a characteristic subgroup of $\text{Fitt}(H)$, we have that $K$ is a normal subgroup of $H$ where $\varphi(x) \notin K$. We note that $H/K$ has composition length strictly less than that of $H$; thus, by induction, we have that there exists a normal subgroup $W/K \nsub H/K$ such that $\text{Fitt}(H/W)$ is a finite $q$-group for some prime $q$ and where $\varphi(x) \notin W$ as desired. \end{proof}

\section{Strict distortion of cyclic subgroups in solvable groups}
Throughout this section, $f:\N \to \N$ will be a nondecreasing function where $f(n) \preceq 2^{2^n}$.
\begin{defn}
	Let $G$ be a finitely generated group with a finite generating subset $S$. We say that an infinite order element $x \in G$ is \textbf{strictly $f$-distorted} if there exists constants $C_1,C_2 >0$ such that
	$$
	C_1 \: |n| \leq f(\|x^n\|_S) \leq C_2 \: |n|
$$
for all $n > 0$. If $f(n) = 2^n$, we say that $x$ is \textbf{strictly exponentially distorted}.  When $f$ is linear, we say that $x$ is \textbf{undistorted.}
\end{defn}

The following simple lemma relates the strict distortion of an infinite order element with the strict distortion of proper powers of that element.
\begin{lemma}\label{distorted_powers}
Let $G$ be a finitely generated group with a nontrivial infinite order element $x \in G$ that is strictly $f$-distorted. For all $k > 1$, we have that $x^k$ is a strictly $f$-distorted element of $G$.
\end{lemma}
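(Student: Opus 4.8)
The statement is essentially immediate from the definition, via the substitution $(x^k)^n = x^{kn}$. Here is the approach I would take.

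First, observe that $x^k$ has infinite order: since $x$ has infinite order, $(x^k)^n = x^{kn} \neq 1$ for all $n \neq 0$. So it makes sense to ask whether $x^k$ is strictly $f$-distorted with respect to the fixed generating subset $S$. By hypothesis there exist constants $C_1, C_2 > 0$ with $C_1 \, |m| \leq f(\|x^m\|_S) \leq C_2 \, |m|$ for all $m > 0$.

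Next, fix $n > 0$. Applying the displayed inequality to the positive integer $m = kn$ gives $C_1 \, kn \leq f(\|x^{kn}\|_S) \leq C_2 \, kn$. Since $x^{kn} = (x^k)^n$, this reads $(C_1 k)\, n \leq f(\|(x^k)^n\|_S) \leq (C_2 k)\, n$. Setting $C_1' = C_1 k$ and $C_2' = C_2 k$ (both positive, as $k > 1$) shows that $x^k$ is strictly $f$-distorted, as claimed.

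There is no real obstacle here: the only thing to be careful about is that the distortion hypothesis is quantified over all positive exponents, so it applies verbatim to the exponents of the form $kn$ that arise when rewriting powers of $x^k$ as powers of $x$; the constants simply pick up a factor of $k$.
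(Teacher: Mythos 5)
Your proof is correct and follows exactly the same route as the paper's: substitute the exponent $kn$ into the defining inequality for $x$ and absorb the factor of $k$ into new constants. The only (harmless) addition is your explicit remark that $x^k$ has infinite order, which the paper leaves implicit.
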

\begin{proof}
Let $S$ be a finite generating subset of $G$. There exists constants $C_1, C_2 > 0$ such that $C_1 \: |n| \leq f(\|x^n\|_S) \leq C_2 \: |n|.$ Thus, we have $C_1 \:  k \: |n|  \leq f(\|(x^k)^n\|_S) \leq C_2 \: k \: |n|$.
\end{proof}

It is important to note that if $G$ is a finitely generated solvable group of exponential growth, then it not necessarily the case that $G$ has any distorted elements. Indeed, \cite[Example 7.1]{conner} constructs a group of the form $\Z^4 \rtimes \Z$ which has exponential growth but every element is undistorted. 

For a finitely generated solvable group $G$ with a nontrivial element $x \in \text{Fitt}(G)$, this last proposition gives lower bounds for the size of any finite $p$-witness of $x$ in terms of the nilpotent depth of $x$.
\begin{prop}\label{p_dim_lower_bound}
Let $G$ be a residually finite, finitely generated solvable group with a nontrivial element $x  \in \text{Fitt}(G)$ of nilpotent depth $m$. If $\varphi:G\to H$ is a finite witness for $x$ where $\text{Fitt}(H)$ is a finite $p$-group, then $|H| \geq p$. Moreover, if $m > 1$, then $|H| \geq p^{m + 1}$.
\end{prop}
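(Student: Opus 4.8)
The plan is to exploit the fact that $x$ lies in a normal nilpotent subgroup $N$ of $G$ with $\langle x\rangle\cap\gamma_m(N)\neq\{1\}$, and to push this structure forward through a finite witness $\varphi\colon G\to H$ whose Fitting subgroup is a finite $p$-group. The first (easy) assertion $|H|\geq p$ is immediate: $\varphi(x)\neq1$ and, as in the proof of Proposition~\ref{special_p_by_abelian_quotient}, $\varphi(N)$ is a nontrivial normal nilpotent subgroup of $H$, hence $\varphi(x)\in\operatorname{Fitt}(H)$, so $\operatorname{Fitt}(H)$ is a nontrivial finite $p$-group and therefore has order divisible by $p$; since $\operatorname{Fitt}(H)\leq H$, we get $|H|\geq p$.

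For the bound $|H|\geq p^{m+1}$ when $m>1$, the key step is to show that $\operatorname{Fitt}(H)$ has step length at least $m$, after which Lemma~\ref{p_dim_nilpotent_length} gives $|\operatorname{Fitt}(H)|\geq p^{m+1}$ and hence $|H|\geq p^{m+1}$. To produce the required step length, I would work inside $\varphi(N)$, which is a finite nilpotent $p$-subgroup of $\operatorname{Fitt}(H)$ (it is normal in $H$ and nilpotent, and as above lands in $\operatorname{Fitt}(H)$; being generated by $p$-elements it is a $p$-group). Pick $1\neq y\in\langle x\rangle\cap\gamma_m(N)$, say $y=x^r$. The idea is that $\varphi(y)\in\varphi(\gamma_m(N))=\gamma_m(\varphi(N))$ is a nontrivial element, and a nontrivial element of $\gamma_m$ of a nilpotent group forces that group to have step length at least $m$. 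The only subtlety is ensuring $\varphi(y)\neq1$: since $\varphi$ is a witness for $x$ we know $\varphi(x)\neq1$, but a priori $\varphi(x^r)$ could be trivial. This is where I expect the main work to lie; I would circumvent it by choosing the witness more carefully (the sequence of witnesses is built later from the structure theory, and one is free to arrange that $\varphi$ is injective on a suitable finite cyclic section of $\langle x\rangle$), or, more likely in the intended argument, by passing to a power of $x$ at the outset via Lemma~\ref{distorted_powers} so that the relevant generator of $\langle x\rangle\cap\gamma_m(N)$ is itself a witnessed element, and replacing $x$ by that power throughout. Once $\varphi(y)\neq1$ with $\varphi(y)\in\gamma_m(\varphi(N))$, the subgroup $\varphi(N)$ cannot have step length $\leq m-1$ (that would force $\gamma_m(\varphi(N))=\{1\}$), so it has step length $\geq m$; since $\gamma_m(\varphi(N))\subseteq\gamma_m(\operatorname{Fitt}(H))$, the Fitting subgroup of $H$ also has step length $\geq m$, and Lemma~\ref{p_dim_nilpotent_length} finishes the proof.

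The main obstacle, then, is not the nilpotent-group bookkeeping — which is routine given Lemma~\ref{p_dim_nilpotent_length} and the behaviour of the lower central series under surjections — but rather controlling the interaction between the cyclic subgroup $\langle x\rangle$ and the witness $\varphi$: one must guarantee that the element of $\langle x\rangle$ that detects $\gamma_m(N)$ survives in $H$. I would therefore state at the start of the proof that, after replacing $x$ by a suitable proper power (legitimate by Lemma~\ref{distorted_powers}), we may assume the generator of $\langle x\rangle\cap\gamma_m(N)$ equals $x$ itself, reducing to the clean case $\varphi(x)\neq1$ with $x\in\gamma_m(N)$.
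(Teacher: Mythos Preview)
Your approach is essentially the paper's: show $\operatorname{Fitt}(H)$ has step length at least $m$ via $\varphi(\gamma_m(N))\subseteq\gamma_m(\operatorname{Fitt}(H))$, then invoke Lemma~\ref{p_dim_nilpotent_length}. The paper's proof simply asserts ``by definition, there exists a normal nilpotent subgroup $N$ of $G$ such that $x\in\gamma_m(N)$'' and proceeds directly; you correctly notice that the definition of nilpotent depth only guarantees $\langle x\rangle\cap\gamma_m(N)\neq\{1\}$, and you propose the right fix (replace $x$ by a power via Lemma~\ref{distorted_powers}), which is exactly what the paper does---but only later, in the proof of Theorem~\ref{lower_bound_rf}, not inside the proposition itself. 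So your treatment of this point is in fact more careful than the paper's own proof of the proposition. One minor remark: your justification that $\varphi(N)$ is a $p$-group (``generated by $p$-elements'') is unnecessary and not quite the right reason; it is a $p$-group simply because it sits inside $\operatorname{Fitt}(H)$, which is a $p$-group by hypothesis.
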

\begin{proof}
Since the first statement is clear, we may assume that $m > 1$. By definition, there exists a normal nilpotent subgroup $N$ of $G$ such that $x \in \ga_m(N)$. Since $\varphi(N)$ is nontrivial normal nilpotent subgroup, we have that $\varphi(N) \leq \text{Fitt}(H)$. We claim that $\text{Fitt}(H)$ has step length at least $m$, and for a contradiction, suppose that $\ga_m(\text{Fitt}(H)) = \set{1}$. Since $\varphi(\ga_m(N)) \leq \ga_m(\text{Fitt}(H))$, we must have that $\varphi(x) = 1$ which is a contradiction. Hence, we must have that $\text{Fitt}(H)$ has step length $\ell$ where $\ell \geq m$. By Lemma \ref{p_dim_nilpotent_length}, we have that $|\text{Fitt}(H)| \geq p^{m+1}$. Therefore, $|H| \geq p^{m+1}$. \end{proof}

\section{Proof of Main Theorem}
For the convenience of the reader, we restate Theorem \ref{lower_bound_rf}.
\begin{thmn}[\ref{lower_bound_rf}]
	Let $G$ be a residually finite, finitely generated solvable group that is not virtually nilpotent, and let $f: \N \to \N$ be a nondecreasing function such that $2^n \preceq f(n) \preceq 2^{2^n}$. If $G$ admits an infinite order, strictly $f$-distorted element $x$ contained in $\text{Fitt}(G)$, then $$\log(f(n)) \preceq \Farb_{G}(n),$$ and if $x$ has nilpotent depth $m > 1$, then  $$(\log(f(n)))^{m + 1} \preceq \Farb_{G}(n).$$ 
In particular, $n \preceq \Farb_G(n)$, and if $x$ has nilpotent depth $m > 1$, then $n^{m+1} \preceq \Farb_G(n)$.
\end{thmn}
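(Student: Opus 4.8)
The plan is to exploit the strict $f$-distortion of $x$ to produce an explicit sequence of elements whose finite witnesses are forced to be large, and to combine this with the two preparatory results already established: Proposition~\ref{special_p_by_abelian_quotient}, which lets us assume the Fitting subgroup of a witness is a finite $p$-group, and Proposition~\ref{p_dim_lower_bound}, which gives the $p^{m+1}$ lower bound in terms of nilpotent depth. First I would record the key numerical input: for a prime $\ell$, the order of $\varphi(x)$ in any finite quotient $\varphi:G\to H$ divides $|H|$, so if $\varphi(x^{\ell})=1$ but $\varphi(x)\neq 1$, then $\ell \mid \operatorname{Ord}_H(\varphi(x))$ and hence $\ell \leq |H|$. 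The idea is to choose the exponents so that killing $x^{\ell_i}$ in a finite quotient is ``cheap'' but killing $x$ itself is forced to be expensive. Concretely, take $\ell_i$ to be the $i$-th prime (or a product of the first several primes); then any finite witness $H$ of $x$ must have order divisible by $\ell_i$, unless $\varphi(x)$ already has large order for some other reason.

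The core of the argument would run as follows. Fix an infinite order strictly $f$-distorted element $x\in\operatorname{Fitt}(G)$, with constants $C_1,C_2$ so that $C_1|n|\le f(\|x^n\|)\le C_2|n|$. Given a target length $n$, I want to find an element of word length $\preceq n$ whose depth is $\succeq \log(f(n))$. The natural candidate is $x^{\ell}$ where $\ell$ is the largest prime with $\ell \leq C_2^{-1} f(Cn)$ or thereabouts; by the distortion inequality, $\|x^{\ell}\|\le C n$ for a suitable constant, so $x^\ell$ is a legitimate test element. By Lemma~\ref{distorted_powers} and the fact that $x^\ell\in\operatorname{Fitt}(G)$ still (it is central in the nilpotent subgroup witnessing membership, and a power of $x$ has the same or smaller nilpotent depth --- actually one must check $x^\ell$ retains nilpotent depth $\geq m$, which holds because $\langle x^\ell\rangle \cap \gamma_m(N)$ is still nontrivial as a finite-index subgroup of the infinite group $\langle x\rangle \cap \gamma_m(N)$), we may apply Proposition~\ref{p_dim_lower_bound}. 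Now take any finite witness $\psi:G\to H$ of $x^\ell$; by Proposition~\ref{special_p_by_abelian_quotient} applied to $x^\ell$ we may pass to a quotient where $\operatorname{Fitt}(H)$ is a $p$-group and $\psi(x^\ell)\neq 1$, so Proposition~\ref{p_dim_lower_bound} gives $|H|\ge p$, and $\ge p^{m+1}$ if the nilpotent depth is $m>1$. The remaining point is to force $p$ itself to be large: since $\psi(x^\ell)\ne 1$, the order of $\psi(x)$ in $H$ does not divide $\ell$, so it is divisible by a prime power exceeding $\ell$ --- more carefully, one arranges via the Prime Number Theorem that $\ell$ is large relative to $\log(f(n))$, and that $p \geq \ell$ or the residual finiteness forces a quotient of size at least $\ell$. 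Then $\D_G(x^\ell)\ge \ell^{m+1} \succeq (\log f(n))^{m+1}$, and maximizing over elements of length $\le n$ yields $(\log f(n))^{m+1}\preceq\Farb_G(n)$; the case $m=1$ gives just $\log f(n)$. The final ``In particular'' clauses follow by substituting $f(n)=2^n$, using $2^n\preceq f(n)$ and monotonicity, so $\log f(n)\succeq n$.

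The step I expect to be the main obstacle is the passage from ``$|H|\ge p$ with $\operatorname{Fitt}(H)$ a $p$-group'' to ``$p$ is actually large,'' i.e.\ ensuring the prime $p$ governing the Fitting subgroup is comparable to the chosen exponent $\ell$ rather than something like $2$. The resolution should be to choose $\ell_i$ not as a single prime but as $\operatorname{lcm}\{1,\dots,k_i\}$ for a growing sequence $k_i$, or as a product $p_1\cdots p_j$ of the first several primes, so that any finite quotient in which $x^{\ell_i}$ dies but $x$ survives must have $\operatorname{Ord}_H(\psi(x))$ divisible by some prime $p \geq k_i$ (since it cannot divide $\operatorname{lcm}\{1,\dots,k_i\}$). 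After Proposition~\ref{special_p_by_abelian_quotient} reduces to a $p$-group Fitting subgroup, that surviving prime $p$ satisfies $p>k_i$, and $\psi(x)\in\operatorname{Fitt}(H)$ has order a power of $p$ exceeding $1$, so $|\operatorname{Fitt}(H)|\ge p > k_i$; combined with Lemma~\ref{p_dim_nilpotent_length} and Proposition~\ref{p_dim_lower_bound} this gives $|H|\ge p^{m+1} > k_i^{m+1}$. One then checks, using $f\preceq 2^{2^n}$ so that $\log f(n)$ grows at most exponentially, that $k_i$ can be taken with $\log k_i \approx \log\log f(\cdot)$... no --- rather one wants $k_i \approx \log f(n_i)$ directly, and the Prime Number Theorem (in the form $\log\operatorname{lcm}\{1,\dots,k\}\approx k$) guarantees $\|x^{\ell_i}\|\le C n_i$ for the appropriate $n_i$. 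Careful bookkeeping of these constants, and verifying $x^{\ell_i}$ keeps nilpotent depth $m$, is the routine-but-delicate part; everything else is assembly of the cited propositions.
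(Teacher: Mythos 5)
Your overall strategy matches the paper's: choose $\ell_i$ of roughly lcm type so that $\log f(\|x^{\ell_i}\|) \approx \log \ell_i$, reduce to a finite quotient with $p$-group Fitting subgroup via Proposition~\ref{special_p_by_abelian_quotient}, and then invoke Proposition~\ref{p_dim_lower_bound} to force $|\operatorname{Fitt}(H)| \geq p^{m+1}$. The first conclusion $\log f(n) \preceq \Farb_G(n)$ does go through with your choice $\ell_i = \lcm\{1,\dots,k_i\}$, since any witness $H$ of $x^{\ell_i}$ has $\operatorname{Ord}_H(\psi(x)) \nmid \ell_i$, hence some prime power $> k_i$ divides $|H|$.

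However, there is a genuine gap in your argument for the nilpotent-depth case $m > 1$, and it sits exactly at the step you flagged as the ``main obstacle.'' You claim that after the reduction to a $p$-group Fitting subgroup, ``that surviving prime $p$ satisfies $p > k_i$.'' This does not follow: since $\operatorname{Fitt}(H)$ is a $p$-group, $\operatorname{Ord}_H(\psi(x)) = p^a$ for some $a$, and $p^a \nmid \lcm\{1,\dots,k_i\}$ forces only $p^a > k_i$, not $p > k_i$. The prime $p$ can perfectly well be $2$ with $a$ large. In that situation Proposition~\ref{p_dim_lower_bound} only yields $|\operatorname{Fitt}(H)| \geq 2^{m+1}$, which is a constant, and the separate bound $|\operatorname{Fitt}(H)| \geq p^a > k_i$ only gives $|H| > k_i$, not $|H| > k_i^{m+1}$. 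So your choice of exponent $\ell_i = \lcm\{1,\dots,k_i\}$ cannot produce the $(\log f(n))^{m+1}$ bound; it only reproduces the $\log f(n)$ bound. The paper's fix is to take the exponent $\al_i = \lcm\{1,\dots,p_i-1\}^{m+2}$ rather than the bare lcm. With this choice one argues by cases on $|\operatorname{Fitt}(H)| = q^s$: if $q \geq p_i$ then Proposition~\ref{p_dim_lower_bound} gives $|H| \geq p_i^{m+1}$ directly; if $q < p_i$ and $q^s < p_i^{m+1}$, one shows $q^s \mid \al_i$ by splitting the exponent $s$ against the maximal $v$ with $q^v < p_i$, so that $\varphi(x^{\al_i}) = 1$. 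The extra $(m+2)$-nd power on the lcm is precisely what absorbs small primes raised to high powers, which is the configuration your argument cannot handle.

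One smaller point: your framing ``killing $x^{\ell_i}$ is cheap but killing $x$ is expensive'' is backwards for bounding $\D_G(x^{\ell_i})$ from below. What you actually need (and what the rest of your sketch does correctly) is that any quotient in which $x^{\ell_i}$ \emph{survives} must be large, equivalently that every quotient of order below the target kills $x^{\ell_i}$.
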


\begin{proof}
We first demonstrate that $\log(f(n)) \preceq \Farb_{G}(n)$. Letting $\set{p_i}_{i=1}^{\infty}$ be an enumeration of the primes, we consider the sequence of natural numbers given by $\al_i = \lcm \set{1,\cdots, p_i - 1}$. We claim that $\log(f(\|x^{\al_i}\|_S)) \leq \D_{G}(x^{\al_i})$. By the Prime Number Theorem, we have that $\log(\al_i) \approx p_i$, and by definition, we have that $f(\|x^{\al_i}\|_S) \approx \al_i$. Therefore, $\log(f(\|x^{\al_i}\|_S)) \approx p_i$.  Letting $\varphi: G\to H$ be a surjective group morphism to a finite group where $|H| < p_i$, we have that $\text{Ord}_H(\varphi(x)) < p_i$. Thus, we must have that $\text{Ord}_H(\varphi(x)) \mid \al_i$ which implies that $\varphi(x^{\al_i}) = 1$. Hence, we have the first statement of our theorem.

Now let us assume that $x$ has nilpotent depth $m > 1$. Lemma \ref{distorted_powers} allows us to assume that $x \in \ga_m(N)$ for some normal nilpotent subgroup $N$ of $G$. Denoting an enumeration of the primes as $\set{p_i}_{i=1}^\infty$, we define $\al_i = \lcm\set{1,\cdots, p_i - 1}^{m+2}.$ We will demonstrate that 
	$
	(\log(f\pr{\|x^{\al_i}\|_S})^{m + 1} \leq \D_G(x^{\al_i}).
	$
	By the Prime Number Theorem, we have that $\log(\al_i) \approx p_i$, and by definition, we have $f\pr{\|x^{\al_i}\|_S} \approx \al_i$. Therefore, $\log(f(\|x^{\al_i}\|_S)) \approx p_i$. Hence, we must show that $p_i^{m+ 1} \leq \D_G(x^{\al_i})$.
	
	Let $\map{\varphi}{G}{H}$ be a surjective group morphism to a finite group where $|H| < p_i^{m+1}$. Proposition \ref{special_p_by_abelian_quotient} implies that we may assume that $\text{Fitt}(H)$ is a finite $q$-group for some prime $q$. Since $|H| < p_i^{m+1}$, we have that $|\text{Fitt}(H)| < p_i^{m+1}$ leading to a number of cases.
	
Suppose first that $|\text{Fitt}(H)|< p_i$. By construction, $|\text{Fitt}(H)| \mid \al_i$, and since the order of an element divides the order of the group, we have that $\text{Ord}_H(\varphi(x)) \mid \al_i$. In particular, we have that $\varphi(x^{\al_j}) = 1$ in $H$.
	
Now suppose that $q < p_i$ and that $p_i < |\text{Fitt}(H)|< p_i^{m + 1}$. There exists some natural number $v$ such that $q^v < p_i < q^{v+1}$, and thus,
	$
	q^{v \: (m+1)} < p_i^{m+1} < q^{(v+1) (m+1)}.
	$
	Therefore, we have that $|\text{Fitt}(H)| = q^{v t+ r}$ where $t \leq m + 1$ and $0 \leq r < v$. Given that $q^v < p_i$, we have that $q^v \mid \lcm \set{1, \cdots, p_i - 1}$; in particular, it follows that
	$
	q^{v \: t} \mid \pr{\lcm \set{1, \cdots, p_i - 1}}^{m+1}.
	$
	Since $r < v$, we have that $q^r < p_i$. Hence, it follows that $q^r \mid \lcm \set{1, \cdots, p_i - 1}$. Therefore, $|\text{Fitt}(H)| \mid \al_i$, and since the order of $\varphi(x)$ divides $|\text{Fitt}(H)|$, we must have that $\varphi(x^{\al_i}) = 1$.

Finally, we assume that $q \geq p_i$. If $\varphi(x) = 1$, there is nothing to prove. Therefore, we may assume that $\varphi(x) \neq 1$, and by Proposition \ref{p_dim_lower_bound}, we have that $|\text{Fitt}(H)| \geq q^{m+1}$. Hence, we have that either $\varphi(x) = 1$ or $|H| \geq p_i^{m+1}$. In particular, we may ignore this possibility.
	
	Since each possibility has been covered, we must have that $p_{i}^{m+1}\leq \D_{G}(x^{\al_i})$. Therefore, it follows that $(\log(f(\|x_i^{\al_i}\|_S))^{m+1} \preceq \D_{G}(x_i^{\al_i})$. Thus, $(\log(f(n)))^{m+1}  \preceq \Farb_G(n)$.
\end{proof}
\bibliography{bib2}
\bibliographystyle{plain}
\end{document}